\newtheorem{theorem}{Theorem}
\newtheorem{lemma}[]{Lemma}[section]
\newtheorem{conj}[]{Conjecture}
\newtheorem*{remark}{Remark}
\theoremstyle{definition}
\newtheorem{definition}{Definition}[section]
\newtheorem*{question}{Question}
\newcommand{\T}{\mathbb{T}}
\newcommand{\B}{\mathbb{B}}
\newcommand{\R}{\mathbb{R}}
\newcommand{\N}{\mathcal{N}}
\newcommand{\e}{\epsilon}
\begin{document}

\title{
    A Sum-Product Estimate for Well Spaced Sets
}

\author{Shengwen Gan}
\email{shengwen@mit.edu}
\address{Deparment of Mathematics, MIT, Cambridge, MA 02139}

\author{Alina Harbuzova}
\email{hadought@mit.edu}
\address{Deparment of Mathematics, MIT, Cambridge, MA 02139}

\date{\monthyeardate\today}

\begin{abstract}
    We study the $\delta$-discretized  sum-product estimates for well spaced sets. Our main result is: for a fixed $\alpha\in(1,\frac{3}{2}]$, we prove that for any $\sim|A|^{-1}$-separated set $A\subset[1,2]$ and $\delta=|A|^{-\alpha}$, we have: $\mathcal{N}(A+A, \delta)\cdot \mathcal{N}(AA, \delta) \gtrsim_{\epsilon}|A|\delta^{-1+\epsilon}$.
\end{abstract}
\maketitle
\section{Introduction}\label{introduction}
The sum-product problem was first proposed by Erd\"{o}s and Szemerédi. They conjectured:
\begin{conj}[Sum-Product Problem]\label{erdos-szemeredi}
Let $0<\e<1$. For any  $A\subset \R:$
$$\max{(|A+A|,|AA|)}\gtrsim |A|^{1+\epsilon}.$$
\end{conj}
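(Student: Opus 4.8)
The displayed statement is the Erd\"{o}s--Szemer\'edi sum-product conjecture stated above, which is open for every $\epsilon$ bounded away from $0$; accordingly I will describe the strongest line of attack currently available, indicate where the $\delta$-discretized mechanism of this paper would enter, and be explicit about the obstruction that keeps the full conjecture out of reach. Write $n = |A|$, and assume $0 \notin A$ by discarding one element if necessary. The first step is the incidence-geometric argument of Elekes: form the point set $P = (A+A) \times (AA) \subset \R^2$ together with the family $L$ of the $n^2$ lines $\{\,y = a'(x - a) : a, a' \in A\,\}$. Each line $y = a'(x-a)$ contains the $n$ distinct points $(a+b,\, a'b)$, $b \in A$, all lying in $P$, so the incidence count satisfies $I(P,L) \gtrsim n^3$. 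Plugging this into the Szemer\'edi--Trotter estimate $I(P,L) \lesssim (|P|\,|L|)^{2/3} + |P| + |L|$ with $|L| = n^2$ forces $|P| \gtrsim n^{5/2}$, that is $|A+A|\,|AA| = |P| \gtrsim n^{5/2}$, and hence $\max(|A+A|,|AA|) \gtrsim n^{5/4}$.

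To push the exponent up I would invoke Solymosi's refinement, which controls the multiplicative energy $E^{\times}(A) := \#\{(a,b,c,d) \in A^4 : ab = cd\}$ by decomposing the grid $A \times A$ into its intersections with the lines through the origin. After a dyadic pigeonholing one retains the lines carrying $\sim \Delta$ grid points each; for two angularly consecutive such lines with point sets $P, P'$ the Minkowski sum $P + P'$ has $\sim \Delta^2$ elements, lies in $(A+A) \times (A+A)$, and occupies an angular sector disjoint from those produced by other consecutive pairs, which forces $E^{\times}(A) \lesssim |A+A|^2 \log n$. Combined with the Cauchy--Schwarz bound $|AA| \ge n^4 / E^{\times}(A)$ this gives $|AA|\,|A+A|^2 \gtrsim n^4 / \log n$, hence $\max(|A+A|,|AA|) \gtrsim n^{4/3}(\log n)^{-1/3}$; the further improvements of Konyagin--Shkredov, Rudnev--Shkredov, Shakan, and Rudnev--Stevens extract a bit more via third-moment and higher-energy variants. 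Separately, one would like to iterate the $\delta$-discretized estimate $\mathcal{N}(A+A,\delta)\,\mathcal{N}(AA,\delta) \gtrsim_{\epsilon} |A|\,\delta^{-1+\epsilon}$ across scales: dyadically split a general finite $A \subset \R$, rescale each piece into $[1,2]$, further decompose it into $\sim |A|^{-1}$-separated well spaced subsets at a range of scales $\delta$ down to $|A|^{-3/2}$, apply the estimate to each, and reassemble. Since $\mathcal{N}(\cdot,\delta) \le |\cdot|$ this already reproves the Elekes exponent $5/4$ for full-density subsets of an interval, and the hope is that propagating the $\delta^{-\epsilon}$ gain through the iteration compounds toward a bound whose exponent approaches the conjectured value.

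The decisive obstacle --- and the reason the conjecture remains open --- is that every known incidence, energy, or $\delta$-discretized argument loses a fixed amount in the exponent at each ``interaction'' step, so the resulting bound for $|A+A|\,|AA|$ saturates strictly below $n^4$ and the one for $\max(|A+A|,|AA|)$ strictly below $n^2$. Reaching arbitrary $\epsilon < 1$ appears to require a genuinely self-improving scheme, in which a weak sum-product inequality is fed back into its own proof to bootstrap itself all the way up, or a new structural dichotomy that directly excludes the near-extremal configurations; supplying such a mechanism is the step I expect to be the real difficulty, and nothing in the discretized toolkit developed here is by itself strong enough to do it.
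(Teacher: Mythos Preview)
The statement you were asked to address is a \emph{conjecture}, and the paper does not prove it; the paper merely records it as the Erd\H{o}s--Szemer\'edi sum-product problem and notes that Elekes established the case $\epsilon = \tfrac{1}{4}$ via Szemer\'edi--Trotter. Your write-up correctly recognizes that the full conjecture is open, reproduces Elekes' $n^{5/4}$ bound, surveys the subsequent energy-based improvements (Solymosi, Konyagin--Shkredov, etc.), and is explicit that no known mechanism---including the $\delta$-discretized estimate of this paper---closes the gap. Since there is no proof in the paper to compare against, and you have not claimed to supply one, there is nothing further to adjudicate; your assessment of the situation is accurate.
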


The range for $\e$ has been improved by a lot of people. One improvement is made by Gy\"{o}rgy Elekes  \cite{elekes1997number}. He applied the Szemerédi-Trotter theorem and proved the conjecture \ref{erdos-szemeredi} for $\epsilon=\frac{1}{4}.$ In our paper, we will use the idea from Elekes' paper.\\

The $\delta$-discretized version of the sum-product problem was first discussed by  Katz and Tao \cite{katz2001some}. They study the $(\delta,\sigma)_1$-set which is an analog of an $\sigma$-dimensional set in $\R$. More precisely, we say $A$ is an $(\delta,\sigma)_1$-set, if $A$ is a $\delta$-separated subset of $[1,2]$, $|A|\sim \delta^{-\sigma}$ and $|A\cap I|\lesssim |I|^{\sigma}|A|$. The analogs of the sumsets and productsets are then $\N(A+A,\delta)$ and $\N(AA,\delta)$. (Here, we use $\N(B,\delta)$ to denote the maximal cardinality of the $\delta$-separated subset of $B$). The following question is conjectured:  
\begin{conj}
Let $0<\sigma<1$, then there exists a number $c=c(\sigma)>0$ such that for any $(\delta,\sigma)_1$-set $A\subset[1,2]$, we have 
$$\max{(\N(A+A,\delta),\N(AA,\delta))}\gtrsim \delta^{-c}|A|.$$
\end{conj}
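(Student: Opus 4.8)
I would prove this by relating it to Bourgain's resolution of the Katz--Tao ring conjecture: a $(\delta,\sigma)_1$-set whose sumset and productset are both small at scale $\delta$ would behave like an ``approximate subring'' of $[1,2]$ at scale $\delta$ of intermediate dimension, and no such object exists. Concretely, suppose the statement fails; then for every $c>0$ there is a $(\delta,\sigma)_1$-set $A\subset[1,2]$, with $\delta$ as small as we please, such that $\N(A+A,\delta)$ and $\N(AA,\delta)$ are both at most $\delta^{-c}|A|$. Fix a small $c>0$, to be chosen at the end, and such an $A$. Since $A\subset[1,2]$, the logarithm is bi-Lipschitz on $A$, so $\N(AA,\delta)\sim\N(\log A+\log A,\delta)$; moreover partitioning $[1,2]$ into $\delta$-intervals identifies a $\delta$-separated set with a subset of $\mathbb Z$ up to $O(1)$ losses. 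Hence the standard tools of additive combinatorics --- the Ruzsa triangle inequality, the Plünnecke--Ruzsa inequalities, and the Balog--Szemerédi--Gowers theorem --- are available at scale $\delta$ with only $\delta^{\pm\epsilon}$ errors, \emph{both} additively and multiplicatively.

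The first step is to pass from ``small sumset and small productset'' to genuine ring structure. Using these tools I would produce a subset $A'\subseteq A$ with $|A'|\ge\delta^{O(c)}|A|$ such that every bounded-complexity expression in $A'$ built from $+,-,\times$ --- for instance $A'A'+A'A'-A'A'$ --- satisfies $\N(\,\cdot\,,\delta)\le\delta^{-O(c)}|A|$. The purely additive and purely multiplicative control is immediate from Plünnecke--Ruzsa; the point requiring care is the control of \emph{mixed} expressions, which is obtained by a Balog--Szemerédi--Gowers argument for both operations together with Ruzsa-type covering lemmas. I would also check that the $(\delta,\sigma)_1$-hypothesis passes to $A'$ up to a $\delta^{O(c)}$ loss: $A'$ stays $\delta$-separated with $|A'|\gtrsim\delta^{-\sigma+O(c)}$ and $|A'\cap I|\lesssim\delta^{-O(c)}|I|^{\sigma}|A'|$. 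This non-concentration is exactly what rules out the degenerate situations (an approximate ring concentrated in a single $\delta$-ball, say) in which a small sumset and productset would be unremarkable, and it gives $A'$ a definite fractional dimension bounded away from both $0$ and $1$.

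The heart of the argument is then a multiscale induction, on $N=\log_2(1/\delta)$, showing that a non-concentrated $\delta$-approximate subring of $[1,2]$ is forced to be essentially full-dimensional, $\N(A',\delta)\ge\delta^{-1+\eta}$ with $\eta=\eta(c)\to 0$ as $c\to 0$. One fixes the intermediate scale $\rho=\delta^{1/2}$ and writes $\overline{A'}$ for the set of $\rho$-intervals meeting $A'$. Coarsening preserves both the approximate-ring property and the non-concentration, so $\overline{A'}$ is a non-concentrated $\rho$-approximate ring and the inductive hypothesis at scale $\rho$ yields $\N(\overline{A'},\rho)\ge\rho^{-1+\eta}$. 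On the other hand, for a positive proportion of the $\rho$-intervals $J$ meeting $A'$, the rescaled fiber $\rho^{-1}(A'\cap J-x_J)$ is again a non-concentrated approximate ring, now at scale $\delta/\rho=\rho$, so the inductive hypothesis gives $\N(A'\cap J,\delta)\ge\rho^{-1+\eta}$ for those $J$. Multiplying the number of populated fibers by the typical fiber size gives $\N(A',\delta)\ge\rho^{-1+\eta}\cdot\rho^{-1+\eta}=\delta^{-1+2\eta}$. Comparing with $\N(A',\delta)\le|A|\sim\delta^{-\sigma}$ would force $1-2\eta\le\sigma$; but $\sigma<1$ is fixed and $\eta(c)\to 0$, so choosing $c$ small enough that $\eta(c)<\tfrac{1-\sigma}{2}$ produces a contradiction, which gives the required $c=c(\sigma)>0$.

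The step I expect to be the main obstacle --- and the reason this is delicate --- is making the multiscale induction actually close. First, one must show that the rescaled $\rho$-fibers of a $\delta$-approximate ring are themselves approximate rings at scale $\delta/\rho$: a fiber is an intersection with a very short interval and its additive/multiplicative structure is not visibly inherited, so one has to play the fibers and the coarsening off against one another. Second, and more seriously, the approximate-ring constant and the non-concentration exponent both degrade a little each time one splits scales, and the induction only closes if this degradation is summable across all $\sim\log N$ levels of the recursion; for that reason one does not prove the clean ``full-dimensional'' statement directly but rather a dimension-increment dichotomy (``either $A'$ is essentially full-dimensional, or passing to a finer scale increases its dimension by a fixed amount'') and iterates the gain. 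Carrying out this bookkeeping is the technical core of Bourgain's proof of the Katz--Tao ring conjecture; by contrast, the unconditional result of this paper is obtained for well-spaced sets by the much more direct Elekes--Szemerédi--Trotter route, which avoids the multiscale analysis entirely.
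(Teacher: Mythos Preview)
The paper does not prove this statement. It is recorded as a conjecture (the Katz--Tao discretized ring problem), and the paper simply notes that Bourgain resolved it in \cite{bourgain2010discretized}, with a later explicit exponent due to Guth--Katz--Zahl. The paper's own contribution is Theorem~\ref{main}, which treats only the much more restrictive class of $\sim|A|^{-1}$-separated sets and is proved by the Elekes/Szemer\'edi--Trotter argument you allude to in your final sentence. So there is no ``paper's own proof'' of this conjecture to compare your proposal against; you are sketching Bourgain's theorem, not anything the present paper establishes.

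As a sketch of Bourgain's strategy your outline is in the right spirit, and you correctly flag the two real difficulties: upgrading simultaneous small doubling under $+$ and $\times$ to control of \emph{mixed} polynomial expressions (this is not a routine BSG/Pl\"unnecke step), and keeping the multiscale recursion from eating its own tail. One concrete issue with the induction as you wrote it: from the hypothesis $\N(\,\cdot\,,\rho)\ge\rho^{-1+\eta}$ at scale $\rho=\delta^{1/2}$ applied to the coarsening and to the fibers, you obtain $\N(A',\delta)\ge\delta^{-1+2\eta}$, which is \emph{weaker} than the inductive claim $\N(A',\delta)\ge\delta^{-1+\eta}$ at scale $\delta$, so the induction does not close in this form. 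That is precisely why, as you then say, Bourgain replaces the direct ``full-dimensional'' claim by a dimension-increment dichotomy and iterates a fixed gain; but that reformulation is the argument, not a cosmetic repackaging, and the version you displayed first would need to be discarded rather than patched.
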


Bourgain proved this conjecture in \cite{bourgain2010discretized}. Guth, Katz, and Zahl obtained an explicit bound that $c=\frac{\sigma(1-\sigma)}{4(7+3\sigma)}$ in \cite{guth2018discretized}. In this paper, we will consider sets with stronger spacing conditions: the set $A$ is roughly an arithmetic progression with uncertainty $|A|^{-1}$. Our result is the following:

\begin{theorem}\label{main}
Fix a number $\alpha \in (1,\frac{3}{2}]$. For any subset $A\subset [1,2]$ such that $A$ is $\sim|A|^{-1}$-separated, let $\delta = |A|^{-\alpha}$, a scale much smaller than the separation of the set. Then we have:
$$\mathcal{N}(A+A, \delta)\cdot \mathcal{N}(AA, \delta) \gtrsim_{\epsilon}|A|\delta^{-1+\epsilon}= |A|^{1+\alpha-\epsilon'},$$
where $\e>0$ and $\e'=\e\alpha$.

As an immediate corollary:
$$\max\{\mathcal{N}(A+A, \delta), \mathcal{N}(AA, \delta)\} \gtrsim_{\epsilon} \delta^{-1/2+\epsilon}|A|^{1/2}=|A|^{\frac{1+\alpha}{2} - \epsilon'}.$$
\end{theorem}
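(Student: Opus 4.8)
The plan is to run Elekes' incidence argument in a $\delta$-discretized form. Write $N=|A|$, and, following Elekes, attach to each pair $(a,b)\in A\times A$ the line $\ell_{a,b}=\{(x,y):y=a(x-b)\}$. Since $A$ is $\sim N^{-1}$-separated these are $\sim N^2$ distinct lines, and --- the structural point I want to exploit --- their slopes (the elements of $A$) are $\gtrsim N^{-1}$-separated, a scale far coarser than $\delta=N^{-\alpha}$. For every $a'\in A$ the point $(b+a',aa')$ lies on $\ell_{a,b}$ and belongs to $(A+A)\times(AA)$. Cover $A+A$ by $X:=\mathcal{N}(A+A,\delta)$ intervals of length $\delta$ and $AA$ by $Y:=\mathcal{N}(AA,\delta)$ such intervals, and let $\mathcal{G}$ be the grid of at most $XY$ axis-parallel $\delta$-cells they determine. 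Because the abscissae $b+a'$, $a'\in A$, are $\gtrsim N^{-1}\gg\delta$-separated, the $N$ points $(b+a',aa')$ lie in $N$ distinct cells of $\mathcal{G}$; hence each of the $\sim N^2$ lines meets $\gtrsim N$ cells and the total number of line--cell incidences is $\gtrsim N^{3}$.

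The crux is to complement this with an upper bound for the incidences of the form $\lesssim_\epsilon \delta^{-\epsilon}\,(XY)^{2/3}N^{7/3}\delta^{2/3}$, valid in the range $\alpha\le\tfrac32$ (together with lower-order terms that are dominated here); rearranging gives $XY\gtrsim_\epsilon |A|\,\delta^{-1+\epsilon}$, which is the theorem, and the corollary then follows from $\max\{u,v\}\ge(uv)^{1/2}$. I would isolate this upper bound as a separate lemma: a Szemerédi--Trotter type estimate for $\delta$-cells versus a line family whose slopes are $\gtrsim N^{-1}$-separated and which is a union of $\lesssim N$ pencils, the $b$-th pencil being $\{\ell_{a,b}:a\in A\}$ (all through $(b,0)$). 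This lemma is the main obstacle, because the classical Szemerédi--Trotter theorem is genuinely false for ``fat points'' such as $\delta$-cells, so it cannot simply be invoked: one must build in the geometry. The two inputs I expect to carry the proof are (i) two distinct lines of the family are within $\delta$ of each other along at most $\sim \delta N^2$ consecutive cells --- since their slopes differ by $\gtrsim N^{-1}$, over any window of length $\gtrsim 1$ they separate by $\gg\delta$ --- and (ii) each $\delta$-cell meets at most one line from each pencil, hence $\lesssim N$ lines in all; feeding (i)--(ii) into a cell-decomposition (or dyadic-popularity) argument carried out at scale $\delta$ should control the fat-point error. The hypothesis $\alpha\le\tfrac32$, i.e. $\delta\gtrsim N^{-3/2}$, is precisely what keeps these error terms below the main term; the arithmetic progression, for which $\mathcal{N}(A+A,\delta)\sim N$ while $\mathcal{N}(AA,\delta)\sim\delta^{-1}$, shows the estimate --- hence the theorem --- is sharp at $\alpha=\tfrac32$, where it reproduces Elekes' exponent $\tfrac54$.

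As a sanity check, a bare pigeonhole already gives the weaker bound $XY\gtrsim N^2$: the $\sim N^3$ ``special'' points $(b+a',aa')$ lie in $\le XY$ cells, while any single $\delta$-cell carries at most $N$ of them (for each $b$ the conditions $b+a'\in I$ and $aa'\in J$ pin down $a'$, and then $a$). The entire force of the argument is to upgrade this by the factor $\delta^{-1}/N=N^{\alpha-1}$, and it is here --- not in the bookkeeping --- that Elekes' incidence mechanism, rather than mere counting, is indispensable.
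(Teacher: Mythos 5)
Your setup is the same as the paper's: you both run Elekes' construction, attaching to each pair in $A\times A$ a line whose slope lies in $A$, observing that every line carries $N$ points of $(A+A)\times(AA)$, and converting this to a $\gtrsim N^3$ lower bound on line--cell incidences at scale $\delta$. You also correctly identify the obstruction: the classical Szemer\'edi--Trotter bound is false for $\delta$-cells, so a genuinely $\delta$-discretized incidence estimate is required.

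The genuine gap is precisely that estimate, which you acknowledge as ``the main obstacle'' but do not prove. The paper does not prove it either --- it \emph{cites} it. The incidence bound for a well-spaced family of $\delta$-tubes against $\delta$-balls is the Guth--Solomon--Wang theorem from \cite{guth2019incidence} (Theorem~\ref{ST} here): if $\mathbb{T}$ is a family of $\delta$-tubes that is $W^{-1}$-well-spaced, then $|P_r(\mathbb{T})|\lesssim_\epsilon \delta^{-\epsilon}r^{-3}W^4$ for $r>\max(\delta^{1-\epsilon}W^2,1)$. Summing over dyadic $r$ (Lemma~\ref{ST2}) gives $I(\mathbb{T},\mathbb{B})\lesssim_\epsilon \delta^{1-\epsilon}|\mathbb{B}|W^2+\delta^{-2+\epsilon}$, and combining this with the $\gtrsim N^3$ lower bound, with $W\sim N$ and $r\sim N$, yields the theorem. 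Your proposed bound $I\lesssim_\epsilon\delta^{-\epsilon}(XY)^{2/3}N^{7/3}\delta^{2/3}$ has a different shape (a fractional Szemer\'edi--Trotter power rather than the linear-in-$|\mathbb{B}|$ bound with an additive $\delta^{-2+\epsilon}$ error), and the sketch you give for proving it --- the separation of slopes plus ``a cell-decomposition or dyadic-popularity argument at scale $\delta$'' --- is not a proof and, more importantly, is unlikely to produce the required gain. The Guth--Solomon--Wang bound for well-spaced tubes is not an elementary cell-decomposition fact: its proof uses a Fourier-analytic high/low frequency decomposition, and it is exactly the well-spacing of $\mathbb{T}$ at scale $W^{-1}\gg\delta$ (inherited here from the $\sim N^{-1}$-separation of $A$) that makes the theorem available and drives the improvement over the trivial pigeonhole bound $XY\gtrsim N^2$. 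So the reduction you carry out is correct and matches the paper, but the key lemma you flag as missing is a deep external input, not something your proposed argument supplies.

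Two smaller inaccuracies worth fixing: (a) two lines of the family whose slopes differ by $\gtrsim N^{-1}$ stay within $\delta$ of each other over a window of length $\lesssim \delta N$, i.e.\ over $\lesssim N$ consecutive $\delta$-cells, not $\sim\delta N^2$; (b) the product bound $\mathcal{N}(A+A,\delta)\cdot\mathcal{N}(AA,\delta)\gtrsim_\epsilon |A|\delta^{-1+\epsilon}$ is sharp for every $\alpha\in(1,3/2]$, as the arithmetic-progression example shows, not only at $\alpha=3/2$; what happens at $\alpha=3/2$ is that the corollary exponent $(1+\alpha)/2$ reaches Elekes' $5/4$, and that the error term $\delta^{-2+\epsilon}$ in the incidence bound becomes comparable to the main term $N^3$, which is why the hypothesis $\alpha\le 3/2$ is needed.
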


\begin{remark}
Our estimate for the product $\mathcal{N}(A+A, \delta)\cdot \mathcal{N}(AA, \delta)$ is the best possible for if we consider an arithmetic progression $A$, then
$$\mathcal{N}(A+A,\delta)\cdot \mathcal{N}(AA,\delta)\lesssim |A|\cdot \delta^{-1}.$$
Therefore, our bound on the product is tight.
\end{remark}

One thing to be noted is that the statement of our theorem is numerically the same to Garaev's result \cite{garaev2008sum} which is the sum-product estimate in the finite field setting. Let's first state Garaev's result:

\begin{theorem}[Garaev, \cite{garaev2008sum}]
In $\mathbb{F}_p$, if $A\subset \mathbb{F}_p$ with $|A|>p^{\frac{2}{3}}$, then
$$\max\{|A + A|, |AA|\} \gtrsim p^{1/2}|A|^{1/2}.$$
Also, for any integer $N\in [1,p]$, one can construct a subset $A\subset\mathbb{F}_p$ with $|A|=N$, such that
$$\max\{|A + A|, |AA|\} \lesssim p^{1/2}|A|^{1/2}.$$
\end{theorem}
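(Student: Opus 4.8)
\medskip
\noindent\textit{A proof plan.} I would treat the two halves of the statement separately.

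\medskip
\noindent\textit{The lower bound.} The plan is to feed Elekes' point--line configuration into the only incidence bound available over a prime field. Discarding $0$ from $A$ if necessary (this costs only a constant in the conclusion), assume $0\notin A$. In the affine plane $\mathbb{F}_p^2$ take the point set $P=(A+A)\times(AA)$, so $|P|=K$ with $K:=|A+A|\cdot|AA|$, and the family $\mathcal{L}$ of lines $\ell_{s,t}\colon y=t(x-s)$, $(s,t)\in A\times A$. These $|A|^2$ lines are pairwise distinct (equal slope $\Rightarrow$ equal intercept $\Rightarrow s=s'$, using $t\ne0$), and for every $a\in A$ the point $(s+a,\,ta)$ lies on $\ell_{s,t}$ and in $P$; hence each line carries at least $|A|$ points of $P$, so $I(P,\mathcal{L})\ge|A|^3$. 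On the other hand, the point--line incidence graph of $\mathbb{F}_p^2$ is pseudorandom with nontrivial eigenvalue $\sqrt p$, so the expander mixing lemma (Vinh's incidence inequality) gives
\[
I(P,\mathcal{L})\;\le\;\frac{|P|\,|\mathcal{L}|}{p}+\sqrt p\,(|P|\,|\mathcal{L}|)^{1/2}\;=\;\frac{K|A|^2}{p}+\sqrt p\,|A|\,K^{1/2}.
\]
Comparing the two bounds and cancelling a factor $|A|$ yields $|A|^2\le K|A|/p+\sqrt p\,K^{1/2}$; if the first term dominates then $K\gtrsim p|A|$, while if the second dominates then $K\gtrsim|A|^4/p$. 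Since $\max\{|A+A|,|AA|\}\ge K^{1/2}$ and since the hypothesis $|A|>p^{2/3}$ is exactly the inequality $|A|^4/p\ge p|A|$, both cases give $\max\{|A+A|,|AA|\}\gtrsim(p|A|)^{1/2}$, hence the corollary. The one point I expect to matter is having the \emph{sharp} incidence bound: the elementary Cauchy--Schwarz incidence estimate is too lossy here, and it is precisely the $\sqrt p$ saving coming from the eigenvalue of the incidence graph that produces the threshold $p^{2/3}$ (cruder estimates give at best $p^{3/4}$). Garaev's own argument instead performs the dual computation directly with additive characters and Parseval, which is morally the same input.

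\medskip
\noindent\textit{Sharpness.} For the construction, fix a primitive root $g$ of $\mathbb{F}_p$, write $\operatorname{ind}_g$ for the discrete logarithm to base $g$, and for a parameter $T\le p$ put
\[
A_T\;=\;\{\,x\in\{1,2,\dots,T\}\;:\;\operatorname{ind}_g(x)\in\{1,2,\dots,T\}\,\},
\]
the integers in $[1,T]$ that lie in the geometric progression $\{g,g^2,\dots,g^T\}$. Since $A_T$ sits inside an interval of length $T$, its sumset sits inside an interval of length $2T$, so $|A_T+A_T|\le 2T$; since it sits inside a geometric progression of length $T$, its product set sits inside one of length $2T$ (exponents add), so $|A_TA_T|\le 2T$. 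Thus $\max\{|A_T+A_T|,|A_TA_T|\}\le 2T$, and it remains to tune $T$ so that $|A_T|$ is the prescribed size. I would expand the condition $\operatorname{ind}_g(x)\in[1,T]$ into multiplicative characters and estimate with P\'olya--Vinogradov together with the geometric-series bound for $\sum_{j\le T}\chi(g)^j$, obtaining $|A_T|=T^2/(p-1)+O(\sqrt p\log^2 p)$; taking $T\asymp(pN)^{1/2}$ then gives $|A_T|\asymp N$ as soon as $N\gg\sqrt p\log^2 p$, and deleting surplus elements (which only shrinks $A+A$ and $AA$) produces $A$ with $|A|=N$ and $\max\{|A+A|,|AA|\}\le 2T\lesssim(pN)^{1/2}$. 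For $N\le p^{1/3}$ any $A$ works, since then $|A|^2\le(pN)^{1/2}$, and for $N$ of order $p$ an interval of length $N$ works, since its product set has size at most $p\lesssim(pN)^{1/2}$. The one delicate range is $p^{1/3}\lesssim N\lesssim\sqrt p\log^2 p$, where the main term $T^2/p$ no longer beats the error in counting $A_T$; I expect this to be the main obstacle on the sharpness side, to be handled by a more economical choice of the interval and the progression.
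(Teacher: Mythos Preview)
The paper does not prove this theorem. Garaev's result is only \emph{stated} (with citation to \cite{garaev2008sum}) in order to point out the numerical coincidence with Theorem~\ref{main}; no argument for either the lower bound or the sharpness construction appears anywhere in the paper. So there is no ``paper's own proof'' to compare your proposal against.

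That said, your proposal is a sound sketch of a known route to Garaev's theorem. For the lower bound you run Elekes' point--line configuration through Vinh's spectral incidence inequality; this is not Garaev's original exponential-sum computation, but it is well known to recover the same bound with the same $p^{2/3}$ threshold, and your dichotomy analysis is correct. For sharpness, your $A_T=\{x\le T:\operatorname{ind}_g x\le T\}$ is exactly Garaev's construction, and the P\'olya--Vinogradov count you outline is the standard way to show $|A_T|\sim T^2/p$. You are right that the intermediate range (roughly $p^{1/3}\lesssim N\lesssim p^{1/2+o(1)}$) is not covered by the three cases you list; Garaev handles this by a slightly more careful choice of the two progression lengths (they need not both equal $T$), which you could emulate if you wanted a complete proof. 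But none of this engages with the present paper, whose contribution is the $\delta$-discretized analogue over $\mathbb{R}$, not the finite-field statement.
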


Note that the estimate $\max\{\mathcal{N}(A+A, \delta), \mathcal{N}(AA, \delta)\}\gtrsim_{\epsilon} \delta^{-1/2+\epsilon}|A|^{1/2}$ that we proved meets with Garaev's estimate $\max\{|A + A|, |AA|\}\gtrsim p^{1/2}|A|^{1/2}$ (If we think of $\delta^{-1}=p$ as the ``cardinality" of the ambient space). Also, in Theorem \ref{main}, the assumption $\alpha\leq\frac{3}{2}$ implies $|A|=\delta^{-1/\alpha}\geq\delta^{\frac{2}{3}}=p^{\frac{2}{3}}$ which is just the assumption in Garaev's result.

However, we are not able to construct an example as in Garaev's result to show the estimate is sharp. The key idea of constructing the example in Garaev's result is that we can find an arithmetic progression and a geometric progression whose intersection is not too small. But under the well spacing condition in our paper, we can show that the intersection of an arithmetic progression and a geometric progression is always small. The discussion is in Section \ref{otherresults}.\\
 
Actually, we believe our estimate for $\max\{\mathcal{N}(A+A, \delta), \mathcal{N}(AA, \delta)\}$ is far from being optimal and we also think it is reasonable to conjecture that one of the sumset or the productset should have full size $\delta^{-1}$. We ask the following question: 

\begin{question}\label{mainconj}
Does the one of the sumset and the productset has full size?
More precisely, does there exists an $\alpha>1$ such that the following is true? 

For any subset $A\subset [1,2]$, with $|A| = N$ and $A$ is $\sim|A|^{-1}$-separated, we let $\delta = |A|^{-\alpha}$, then we have:
$$\max\{\mathcal{N}(A+A,\delta), \mathcal{N}(AA,\delta)\} \gtrsim_\e \delta^{-1+\e},$$
for any $\e>0$.
\end{question}

\vspace{5mm}
\noindent
\textbf{Ideas of the proof of Theorem 1.}
We will use Elekes' argument as in \cite{elekes1997number}, together with an $\delta$-discretized version of Szemerédi-Trotter theorem. 
In \cite{guth2019incidence}, Guth, Solomon and Wang proved an incidence estimate for well spaced tubes, which is exactly the $\delta$-discretized Szemerédi-Trotter theorem we want.

By repeating Elekes' argument, we boil down our theorem to proving an upper bound for $r$-rich $\delta$-tubes. In \cite{guth2019incidence}, an upper bound for $r$-rich $\delta$-balls is obtained. So by summing over all $r$'s, we obtain an upper bound for the incidences between $\delta$-balls and $\delta$-tubes. And finally by dividing the incidence by $r$, we obtain an upper bound for $r$-rich $\delta$-tubes.

\vspace{5mm}
\noindent
\textbf{Notations.}
We will use $|A|$ to denote the cardinality of the finite set $A$. We will use $A\lesssim B$ to denote that $A\leq CB$ for constant $C$ which depends only on the dimension $n.$ $A\sim B$ will mean $A\lesssim B$ and $B\lesssim A.$ We will use $A\lesssim_{\e} B$ to denote $A\leq C_{\e}B$ for some constant $C_\e$ depending on $\e$. For $A\subset\R$, we will use $\N(A,\delta)$ to denote the maximal cardinality among all $\delta$-separated subset of $A$.

\section{Proof of the Theorem \ref{main}}\label{statement}

\subsection{\texorpdfstring{$\delta$}{text}-discretized Szemerédi-Trotter Theorem}\label{stanalog}
In our proof we will need a variation of the Szemerédi-Trotter Theorem for $\delta$-discretized lines and points, namely the $\delta$-tubes and $\delta$-balls. Here, a $\delta$-tube is a $\delta\times 1$ rectangle and a $\delta$-ball is a ball of radius $\delta$.

To state the theorem from \cite{guth2019incidence}, we will first provide some notations.

\begin{definition}[Essentially distinct balls and tubes]
For a set of $\delta$-balls $\B$, we say these balls are essentially distinct if for any $B_1\neq B_2\in\B$, $|B_1\cap B_2|\leq (1/2)|B_1|$. Similarly, for a set of $\delta$-tubes $\T$, we say these tubes are essentially distinct if for any $T_1\neq T_2\in\T$, $|T_1\cap T_2|\leq (1/2)|T_1|$.
\end{definition}

In the rest of the paper, we will always consider essentially distinct $\delta$-balls and essentially distinct $\delta$-tubes.

\begin{definition}[Intersection of a $\delta$-ball and a $\delta$-tube]
We will say that a $\delta$-ball intersects a $\delta$-tube if the center of the $\delta$-ball lies inside the $\delta$-tube.
\end{definition}

\begin{definition}[$r$-rich balls and $r$-rich tubes]
Given a set of $\delta$-tubes $\mathbb{T}$, let's define the $r$-rich balls of $\T$ in the following way. We choose a set $\B$ to be a maximal set of essentially distinct $\delta$-balls (For example, $\B$ consists of all $\delta$-balls centered at $(\delta/2) \mathbb{Z}^2$, also note that the choice of $\B$ does not matter for our applications).
Let $P_r(\mathbb{T}):=\{ B\in\B: B\ \text{intersects more than}\ r\ \text{and less than}\ 2r\ \text{tubes from}\ \T \}$. We say $P_r(\T)$ is the set of $r$-rich $\delta$-balls. 

Similarly, given a set of $\delta$-balls $\B$, we define the $r$-rich tubes of $\B$ to be the set $P_r(\B):=\{ T\in\T: T\ \text{intersects more than}\ r\ \text{and less than}\ 2r\ \text{balls from}\ \B \}$. Here, $\T$ is a maximal set of essentially distinct $\delta$-tubes.
\end{definition}

From the definition, $P_r(\T)$ is basically a set of essentially distinct $\delta$-balls each of which intersects about $r$ tubes from $\T$, and $P_r(\B)$ is basically a set essentially distinct $\delta$-tubes each of which intersects about $r$ balls from $\B$.

With the definitions above, we can now state the following analog of the Szemerédi-Trotter Theorem proven in \cite{guth2019incidence}:

\begin{theorem}[Guth, Solomon, Wang, \cite{guth2019incidence}]\label{ST}
Suppose that $\delta\leq W^{-1}\leq 1.$ 
Suppose that $\mathbb{T}$ is a set of $\delta$-tubes in $[0, 1]^2$. We say $\T$ is $W^{-1}$-well spaced if $\lesssim 1$ $\delta$-tube of $\mathbb{T}$ lie in each $W^{-1}\times 1$ rectangle. 
$$ If \  r > \max (\delta^{1-\epsilon} W^{2}, 1),$$
$$then \  |P_r(\mathbb{T})| \lesssim_{\epsilon} \delta^{-\epsilon} r^{-3} W^{4}.$$
\end{theorem}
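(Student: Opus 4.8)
Theorem \ref{ST} is the incidence estimate for well-spaced tubes established in \cite{guth2019incidence}, so the plan is to recall the structure of that proof, which is built on a high--low decomposition. The first move is to pass from the combinatorial count of $r$-rich balls to a super-level-set estimate. Put $g=\sum_{T\in\T}\mathbf{1}_{T}$. For every $B\in P_{r}(\T)$ the $\gtrsim r$ tubes through the center of $B$ all contain a $\sim\delta$-ball about that center, so $g\gtrsim r$ there; since the balls of $P_{r}(\T)$ are essentially distinct this yields $\delta^{2}\,|P_{r}(\T)|\lesssim\bigl|\{x\in[0,1]^{2}:g(x)\gtrsim r\}\bigr|$, and the problem becomes one of bounding the measure of this set.

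Next I would split $g=g_{\mathrm{low}}+g_{\mathrm{high}}$ by cutting the Fourier support of $g_{\mathrm{low}}$ to the ball of radius $\sim W$. Here the well-spacing hypothesis does its work: the condition that $\lesssim 1$ tube of $\T$ lies in each $W^{-1}\times1$ rectangle forces $|\T|\lesssim W^{2}$ and, more importantly, forces $\lesssim W$ tubes of $\T$ through any ball of radius $W^{-1}$, which gives the $L^{\infty}$ bound $\|g_{\mathrm{low}}\|_{\infty}\lesssim_{\epsilon}\delta^{-\epsilon}W^{2}\delta=\delta^{1-\epsilon}W^{2}$. This is exactly where the hypothesis $r>\max(\delta^{1-\epsilon}W^{2},1)$ is used: it makes $g_{\mathrm{low}}<r/2$ everywhere, so $\{g\gtrsim r\}\subseteq\{|g_{\mathrm{high}}|\gtrsim r\}$, and Chebyshev reduces everything to an $L^{2}$ bound, $\delta^{2}|P_{r}(\T)|\lesssim r^{-2}\|g_{\mathrm{high}}\|_{2}^{2}$.

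What remains is to prove $\|g_{\mathrm{high}}\|_{2}^{2}\lesssim_{\epsilon}\delta^{-\epsilon}r^{-1}W^{4}\delta^{2}$, that is, to gain an extra factor $r^{-1}$ beyond the naive tube-pair count $\|g\|_{2}^{2}=\sum_{T,T'}|T\cap T'|$. This is the $\delta$-discretized reflection of the fact that Szemerédi--Trotter is not a consequence of a single Cauchy--Schwarz, and I expect it to be the main obstacle: in \cite{guth2019incidence} it is overcome by a multiscale argument that iterates the high--low decomposition through the dyadic scales between $W^{-1}$ and $\delta$, at each stage subtracting a well-spacing-controlled low part and passing the high part to the next scale, the cumulative effect producing the sharp exponents $r^{-3}W^{4}$. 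By contrast the two preliminary steps — the super-level-set reduction and the single low-frequency estimate — are routine, and it is the low-frequency bound that pins down the precise form of the hypothesis on $r$.
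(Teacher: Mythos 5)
This theorem is a black-box import in the paper you are reading: it is stated as Theorem~\ref{ST} with the attribution ``Guth, Solomon, Wang, \cite{guth2019incidence}'' and is \emph{not} proved here. The paper only records the statement (and, in the remark that follows, a small observation that the hypothesis $|\T|\sim W^2$ can be dropped by completing $\T$ to a full family) before passing to Lemma~\ref{ST2}. So there is no ``paper's own proof'' to compare against, and your sketch should really be assessed as a reconstruction of the argument from the cited reference.

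As such a reconstruction, your outline has the right shape but explicitly leaves the core estimate unproven. The preliminary moves are correct and well calibrated: passing from $|P_r(\T)|$ to the super-level set of $g=\sum_T\mathbf 1_T$, noting that the well-spacing gives $|\T|\lesssim W^2$ and $\lesssim W$ tubes through any $W^{-1}$-ball, and hence $\|g_{\mathrm{low}}\|_\infty\lesssim \delta W^2$ when you cut at frequency $\sim W$, which is exactly what makes the hypothesis $r>\max(\delta^{1-\epsilon}W^2,1)$ kill the low part and reduce to Chebyshev on $g_{\mathrm{high}}$. Your bookkeeping that one then needs $\|g_{\mathrm{high}}\|_2^2\lesssim_\epsilon \delta^{2-\epsilon} r^{-1}W^4$ is also arithmetically right. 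But that estimate is where the whole theorem lives, and a single cut at scale $W$ cannot deliver it: $\|g_{\mathrm{high}}\|_2^2$ does not depend on $r$, so the needed bound would have to hold at $r\sim W^2$, demanding $\|g_{\mathrm{high}}\|_2^2\lesssim W^2\delta^{2-\epsilon}$, which is smaller than the diagonal contribution $\sum_T\|\mathbf 1_T\|_2^2\sim W^2\delta$. This shows that the one-shot Chebyshev after a single high--low split is strictly weaker than Theorem~\ref{ST} for $r$ well above $\delta W^2$, and the additional multiscale/inductive structure of \cite{guth2019incidence} is essential, not cosmetic. Since you defer precisely that step, the proposal is an accurate roadmap but not a proof; in the context of this paper it would be more appropriate to cite the result rather than attempt to reprove it.
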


\begin{remark}
The original theorem requires $|\T|$ to have full size $\sim W^{2}$. Of course we can drop this requirement. To see this, we add some tubes to our $\T$ to get $\T'$, which still satisfies the spacing conditions and with $|\T'|\sim W^{2}$. We see $P_r(\T)\leq\sum_{s\geq r, s\ \rm{dyadic}}|P_s(\T')|\lesssim_{\epsilon} \delta^{-\epsilon} r^{-3} W^{4}.$  
\end{remark}

This theorem estimates the number of the $r$-rich $\delta$-balls. For our purposes, we will derive an estimate for the number of intersections between the tubes and balls, and as an immediate consequence, an estimate for the number of the $r$-rich $\delta$-tubes.

\begin{lemma}[Incidence estimates for $r$-rich balls]\label{ST2}
Suppose $\T$ is a set of $\delta$-tubes $\T$ satisfying the conditions in Theorem \ref{ST}, and $\B$ is a set of essentially distinct $\delta$-balls. Let $I(\mathbb{T},\B)$ be the number of the intersections between $\T$ and $\B$, then:
$$I(\mathbb{T},\B) \lesssim_{\epsilon} \delta^{1-\epsilon} |\B| W^{2} + \delta^{-2+\e},$$
and as a consequence:
$$P_r(\B)\lesssim_{\epsilon} \delta^{1-\e} |\B| W^{2} r^{-1} + \delta^{-2+\e} r^{-1}.$$
\end{lemma}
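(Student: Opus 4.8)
The plan is to obtain the incidence bound $I(\T,\B)$ by dyadically decomposing the balls of $\B$ according to how many tubes of $\T$ they meet, applying Theorem \ref{ST} on each dyadic piece, and summing; the bound on $P_r(\B)$ then follows by a standard double-counting argument. First I would write, for each dyadic $r$, the set $P_r(\T)$ of $\delta$-balls meeting between $r$ and $2r$ tubes of $\T$ (using the maximal essentially distinct family from which $\B$ is drawn). Every intersection between a tube and a ball is counted by exactly one dyadic level, so
$$ I(\T,\B) \le \sum_{r\ \mathrm{dyadic}} 2r\,|P_r(\T)\cap\B| \le \sum_{r\ \mathrm{dyadic}} 2r\,\min\{|\B|,\ |P_r(\T)|\}. $$
For the dyadic levels with $r > \max(\delta^{1-\e}W^2,1)$, Theorem \ref{ST} gives $|P_r(\T)|\lesssim_\e \delta^{-\e}r^{-3}W^4$, so the contribution of these levels is $\lesssim_\e \sum_r \delta^{-\e}r^{-2}W^4 \lesssim_\e \delta^{-\e}W^4 (\delta^{1-\e}W^2)^{-1} = \delta^{-1-O(\e)}W^2$ — wait, this needs care; in fact summing $r^{-2}$ over dyadic $r$ bounded below by $\max(\delta^{1-\e}W^2,1)$ gives $\lesssim \min(1,\ \delta^{-2+2\e}W^{-4})$ times a constant, hence this piece contributes $\lesssim_\e \delta^{-\e}W^4 \cdot \delta^{-2+2\e}W^{-4} = \delta^{-2+\e}$ (after renaming $\e$), which is the second term. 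For the levels with $r \le \max(\delta^{1-\e}W^2,1)$ I instead use the trivial bound $|P_r(\T)\cap\B|\le|\B|$, so their total contribution is $\lesssim |\B|\cdot\max(\delta^{1-\e}W^2,1) \lesssim \delta^{1-\e}|\B|W^2 + |\B|$; since $|\B|\le\delta^{-2}$ always, the stray $|\B|$ term is absorbed into $\delta^{-2+\e}$ (or simply into the first term when $W\gtrsim\delta^{-1/2}$, but absorbing it crudely is cleanest). Combining the two ranges yields $I(\T,\B)\lesssim_\e \delta^{1-\e}|\B|W^2 + \delta^{-2+\e}$.

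For the consequence, note that if $T\in P_r(\B)$ then $T$ meets more than $r$ balls of $\B$, so each such tube contributes more than $r$ to $I(\T,\B)$; since the tubes of $P_r(\B)$ are essentially distinct, distinct tubes have essentially disjoint incidence contributions (or one simply observes $\sum_{T\in P_r(\B)} \#\{B\in\B : B \text{ meets } T\} \le I(\T,\B)$ directly by Fubini), and therefore $r\,|P_r(\B)| \le I(\T,\B)$. Dividing by $r$ and inserting the incidence bound gives $|P_r(\B)| \lesssim_\e \delta^{1-\e}|\B|W^2 r^{-1} + \delta^{-2+\e}r^{-1}$, as claimed.

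I expect the only genuinely delicate step to be the bookkeeping in the dyadic sum: one must check that the geometric series in $r^{-2}$ is dominated by its smallest-$r$ term, that the cutoff $\max(\delta^{1-\e}W^2,1)$ is handled in both regimes $W\le\delta^{-1/2}$ and $W>\delta^{-1/2}$, and that the various $\e$'s accumulated from the $O(\log(1/\delta))$ dyadic scales and from the repeated applications of Theorem \ref{ST} can be absorbed by passing to a slightly larger $\e$ (the $\log$ factors are harmless against $\delta^{-\e}$). There is also a minor point that Theorem \ref{ST} as stated bounds $|P_r(\T)|$ for the specific maximal family $\B_0$ of essentially distinct balls used in its definition; since any essentially distinct family $\B$ can be compared to $\B_0$ up to an absolute multiplicative constant (each ball of $\B$ is within $O(\delta)$ of a bounded number of balls of $\B_0$), this causes no loss. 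None of this requires new ideas beyond those already in \cite{guth2019incidence} and the Remark following Theorem \ref{ST}.
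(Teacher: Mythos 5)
Your proposal is correct and follows essentially the same route as the paper: dyadically decompose the incidence count by ball-richness, use the trivial bound $|P_r(\T)\cap\B|\le|\B|$ for $r\le\max(\delta^{1-\e}W^2,1)$, apply Theorem~\ref{ST} for $r>\max(\delta^{1-\e}W^2,1)$, sum the two geometric series, and then divide by $r$ to get the bound on $P_r(\B)$. The one place where you are more careful than the paper (which tacitly assumes $\delta^{1-\e}W^2\ge 1$, as is true in the application) actually contains a small slip: you cannot absorb the stray $|\B|$ term into $\delta^{-2+\e}$ by invoking $|\B|\le\delta^{-2}$, since $\delta^{-2}>\delta^{-2+\e}$. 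The correct way to handle the regime $\delta^{1-\e}W^2<1$ (so $W^2\lesssim\delta^{-1+\e}$) is to note that the only low dyadic level is $r=1$, and $|P_1(\T)|\lesssim|\T|\cdot\delta^{-1}\lesssim W^2\delta^{-1}\lesssim\delta^{-2+\e}$, since each tube meets $\lesssim\delta^{-1}$ essentially distinct balls; this gives the claimed absorption. With that repair your argument is complete and matches the paper's proof in all essentials.
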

\begin{proof}
Recall that $P_r(\T)$ is the set of $\delta$-balls that intersect with at least $r$ tubes and less than $2r$ tubes. Then:
$$I(\mathbb{T}, \B) \lesssim \sum_{i=0}^{\log_2 W^{2}} |P_{2^i}(\T)| 2^i = \sum_{i=0}^{\log_2 \delta^{1-\e} W^{2}} |P_{2^i}(\T)| 2^i + \sum_{i=\log_2 \delta^{1-\e} W^{2}}^{\log_2 W^{2}} |P_{2^i}(\T)| 2^i.$$
For all $i$, we have $|P_{2^i}(\T)| \leq |\B|$ and therefore 
$$ \sum_{i=0}^{\log_2 \delta^{1-\e} W^{2}} |P_{2^i}(\T)| 2^i \leq |\B| \sum_{i=0}^{\log_2 \delta^{1-\e} W^{2}} 2^i \lesssim \delta^{1-\e} |\B| W^{2}.$$
Moreover by Theorem \ref{ST}, we have
$$\sum_{i=\log_2 \delta^{1-\e} W^{2}}^{\log_2 W^{2}} |P_{2^i}(\T)| 2^i \lesssim\sum_{i = \log_2 \delta^{1-\e} W^{2} }^{\log_2 W^{2}} W^{4} 2^{- 2 i}\delta^{-\epsilon} \lesssim W^{4} (\delta^{1-\e} W^{2})^{-2}\delta^{-\epsilon} = \delta^{-2+\epsilon}.$$
Combining these two inequalities, we get:
$$I(\mathbb{T},\B) \lesssim \delta^{1-\e} |\B| W^{2} + \delta^{-2+\e}.$$
Note that $r$- rich tubes contribute at most $r P_r(\B)$ to $I(\mathbb{T}, \B),$ and thus
$$P_r(\B) \lesssim \delta^{1-\epsilon} |\B| W^{2} r^{-1} + \delta^{-2+\epsilon} r^{-1}.$$
\end{proof}

\subsection{Reduce the set \texorpdfstring{$A$}{text} to the \texorpdfstring{$\delta$}{text}-lattice}\label{discr}
In the proof of Theorem \ref{main}, we can actually assume $A\subset \delta\mathbb{Z}$.

To see this, let $A$ be the set as in the Theorem \ref{main}. We will define another set $A'\subset \delta\mathbb{Z}$ as a replacement of $A$. 
$A'$ is obtained by replacing all the points of $A$ by their closest points in $\delta\mathbb{Z}$. More precisely, if $A=\{a_1,\cdots,a_k\}$ and let $b_i$ be the point in $\delta\mathbb{Z}$ which is closest to $a_i$, then $A'=\{b_1\cdots,b_k\}$. Under our assumption $\delta$ is much less than the separation of points in $A$, so $|A'|=|A|$ and $A'$ is also $\sim |A|^{-1}$-separated.

Consider the sumset $A'+A'$. For all $a_1', a_2' \in A'$ there exist corresponding $a_1, a_2 \in A,$ s.t. $a_1 + a_2 = a_1' + a_2' + O(\delta)$. Thus, $$\mathcal{N}(A+A, \delta) \gtrsim \mathcal{N}(A'+A',\delta).$$
Similar consideration gives $$\mathcal{N}(AA, \delta) \gtrsim \mathcal{N}(A'A',\delta).$$
Thus, in order to prove Theorem \ref{main}, it is sufficient to prove the following simplified theorem:
\begin{theorem}[simplified version]\label{main2}
Fix a number $\alpha\in(1,\frac{3}{2}]$.
For any $A\subset[1,2]\cap\delta\mathbb{Z}$ such that $|A|=N$ and A is $\sim N^{-1}$-separated, we let $\delta=N^{-\alpha}$. Then, $$\mathcal{N}(A+A, \delta)\cdot \mathcal{N}(AA, \delta) \gtrsim_{\e} N^{1+\alpha-\e}.$$
for any $\e>0$.
\end{theorem}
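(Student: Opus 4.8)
The plan is to run Elekes' incidence argument in the $\delta$-discretized setting. Suppose for contradiction (or rather, to set up the counting) that $\mathcal{N}(A+A,\delta) = K_1$ and $\mathcal{N}(AA,\delta) = K_2$; our goal is $K_1 K_2 \gtrsim_\e N^{1+\alpha-\e}$. Following Elekes, for each pair $(a,b) \in A \times A$ I would consider the line $y = a(x - b)$, equivalently the point-slope data encoded as a $\delta$-tube: the tube $T_{a,b}$ is a $\delta$-neighborhood of the segment of this line over $x \in [1,2]$. There are $N^2$ such pairs, and one should check that these tubes are essentially distinct (at the scale $\delta = N^{-\alpha}$, distinct $(a,b)$ give genuinely separated tubes since $A$ is $\sim N^{-1}$-separated). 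On the ``point'' side I would take $\B$ to be a set of $\delta$-balls centered at the grid $(A+A) \times (AA)$, discretized to $\delta\mathbb{Z}^2$ — so $|\B| \lesssim K_1 K_2$. The key geometric observation, exactly as in Elekes, is that the tube $T_{a,b}$ passes through (the $\delta$-ball around) every point $(b + c, ac)$ with $c \in A$, because $a((b+c) - b) = ac$ identically. Since $b + c \in A+A$ and $ac \in AA$, each such point sits in $\B$. Hence each of the $N^2$ tubes is incident to $\gtrsim N$ balls of $\B$, giving $I(\T,\B) \gtrsim N^3$.

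Next I would invoke the incidence bound, Lemma \ref{ST2}. For this I need the well-spacedness hypothesis of Theorem \ref{ST}: I must choose $W$ so that $\T$ is $W^{-1}$-well spaced, i.e.\ at most $O(1)$ tubes $T_{a,b}$ lie in each $W^{-1}\times 1$ rectangle. Since the tubes are parametrized by slope $a \in A$ and intercept $-ab$, and $A$ is $\sim N^{-1}$-separated in $[1,2]$, the natural choice is $W = N$ (the slopes are $N^{-1}$-separated, and for fixed slope the intercepts $ab$ with $b \in A$ are also $\gtrsim N^{-1}$-separated). One checks that $\delta = N^{-\alpha} \le W^{-1} = N^{-1}$ holds precisely because $\alpha > 1$, so the hypothesis $\delta \le W^{-1} \le 1$ of Theorem \ref{ST} is met. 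Then Lemma \ref{ST2} gives
$$N^3 \lesssim I(\T,\B) \lesssim_\e \delta^{1-\e}|\B| W^2 + \delta^{-2+\e} = N^{-\alpha(1-\e)} (K_1 K_2) N^2 + N^{2\alpha(1-\e)},$$
using $\delta = N^{-\alpha}$ and $W = N$. The second term $N^{2\alpha - 2\alpha\e}$ is at most $N^3$ precisely when $2\alpha \le 3$, i.e.\ $\alpha \le \tfrac32$ — this is exactly where the upper restriction on $\alpha$ in the theorem is used (up to the $\e$-loss, which one absorbs). Assuming that term is dominated (handling the borderline $\alpha = 3/2$ case by the $\e$ room), I get $N^3 \lesssim_\e N^{2-\alpha+\alpha\e}(K_1 K_2)$, which rearranges to $K_1 K_2 \gtrsim_\e N^{1+\alpha-\alpha\e}$, i.e.\ the claimed bound with $\e' = \alpha\e$.

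The routine points to nail down are: (i) verifying the tubes $T_{a,b}$ are essentially distinct and $N^{-1}$-well spaced — this is a direct computation from the $\sim N^{-1}$-separation of $A$ and the fact that $a,b$ range over $[1,2]$, so the slope-intercept map is bi-Lipschitz on the relevant scale; (ii) checking that the incidence "the center of $T_{a,b}$'s companion ball $(b+c, ac)$ lies in the tube" is consistent with the definition (center of ball inside tube), which follows since the identity $a((b+c)-b) = ac$ is exact, with no error term, so after discretizing $b+c$ and $ac$ to $\delta\mathbb{Z}$ the point is within $O(\delta)$ of the line; (iii) bookkeeping the $\e$'s so that the $\delta^{-\e}$ losses and the borderline $\alpha=3/2$ case combine cleanly.

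The main obstacle I anticipate is not any single step but making sure the well-spacing parameter is genuinely $W = N$ and not something larger: if the tubes were only, say, $N^{-1/2}$-well spaced, the bound would degrade and the argument would fail. The content here is that $A$ being an (essentially) arithmetic-progression-like set with spacing $N^{-1}$ forces both the slopes $\{a\}$ and, for each fixed slope, the intercepts $\{ab : b \in A\}$ to be $\gtrsim N^{-1}$-separated — the second of these uses $a \ge 1$ so multiplication by $a$ does not contract distances. Once $W = N$ is secured, the rest is the Elekes count plus Lemma \ref{ST2}, and the two terms in Lemma \ref{ST2} are balanced exactly at the stated range of $\alpha$.
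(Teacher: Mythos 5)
Your proposal is correct and matches the paper's proof essentially step for step: the Elekes tubes $T_{a,b}$ of the lines $y=a(x-b)$, the ball family on $(A+A)\times(\delta\text{-net of }AA)$, the well-spacing parameter $W=N$ coming from the $\sim N^{-1}$-separation of $A$ together with $a\ge 1$, and the application of Lemma~\ref{ST2} with the error term $\delta^{-2+\e}$ controlled by $\alpha\le \tfrac32$. The only slips are cosmetic: the tube $T_{a,b}$ must cover the $x$-range $[2,4]$ (not $[1,2]$) since $b+c\in A+A\subset[2,4]$, and the error term equals $N^{2\alpha-\alpha\e}$ rather than $N^{2\alpha-2\alpha\e}$; neither affects the argument.
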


In the following subsection, we will prove this version of the theorem.

\subsection{Proof of the Theorem \ref{main2}}\label{mainproof}
The idea of the proof is roughly the same as in Elekes' work \cite{elekes1997number}.

Let $A = \{a_i, \ 1\leq i \leq N\}.$ First, consider $N^2$ line segments $l_{ij}=\{(x,y): y = a_j(x - a_i)\quad (0\leq x\leq 4)\}$, for any $a_i, a_k \in A.$ Let $\T:=\{\delta-\text{neighborhood of}\ l_{ij}\}$. We see that $\T$ is a set of $N^2$ $\delta$-tubes. Also, we can check $\T$ is $\sim N^{-1}$-well spaced. To see this, let's pick two different segments $l_{ij}$ and $l_{kl}$. If $i\neq k$, then their slopes are different and differed by at least $\sim N^{-1}$ due to the spacing condition of $A$. If $i=k$, then $j\neq l$ and so their intersections with $y$-axis have distance $|a_ia_j-a_ia_l|\gtrsim N^{-1}$. In both cases, we see that $l_{ij}$ and $l_{kl}$ can not lie in a same fat tube of size $\sim N^{-1}\times 4$.

Now define the set of our $\delta$-balls. Consider the set of points $(A+A)\times Q$, where $Q$ is the maximum possible $\delta$-separated subset of $AA$ (then $|Q| = \mathcal{N}(AA, \delta)$). Note that we have assumed $A$ to be a subset of $\delta\mathbb{Z}$, so $A+A\subset \delta\mathbb{Z}$ is $\delta$-separated and $|A+A|=\N(A+A,\delta) $. Now we define $\mathbb{B}$ to be the set of $\delta$-balls centered at $(A+A)\times Q$, so $|\mathbb{B}|=\N(A+A,\delta)\N(AA,\delta)$. We will first prove that all the tubes in $\T$ intersect at least $N$ balls from $\mathbb{B}$ (to apply later Lemma \ref{ST2} with $r = N$ and $W=N^{-1}$):

\begin{lemma}
Each tube in $\T$ intersects at least $N$ $\delta$-balls from $\mathbb{B}$.
\end{lemma}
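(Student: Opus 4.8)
The statement to prove is that each $\delta$-tube $T = l_{ij}$ (the $\delta$-neighborhood of the line $y = a_j(x - a_i)$) intersects at least $N$ balls from $\mathbb{B}$, where $\mathbb{B}$ is centered at the point set $(A+A) \times Q$ and $Q$ is a maximal $\delta$-separated subset of $AA$. The natural approach, following Elekes, is to exhibit $N$ points of $(A+A)\times (AA)$ lying on the line $l_{ij}$, and then argue that these produce at least $N$ distinct balls of $\mathbb{B}$.

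First I would write down the obvious point set: for each $a_k \in A$, the point $\bigl(a_i + a_k,\; a_j a_k\bigr)$ lies exactly on the line $y = a_j(x - a_i)$, since $a_j\bigl((a_i + a_k) - a_i\bigr) = a_j a_k$. Moreover $a_i + a_k \in A+A$ and $a_j a_k \in AA$, so all $N$ of these points (as $k$ ranges over $1,\dots,N$) lie in $(A+A)\times(AA)$ and on the line. Since the $x$-axis interval is $[0,4]$ and $a_i + a_k \in [2,4]$, they lie on the segment $l_{ij}$, hence in the tube $T$.

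Next I would pass from these $N$ points in $(A+A)\times(AA)$ to $N$ balls of $\mathbb{B}$. The first coordinates $a_i + a_k$ are distinct as $k$ varies (since $A$ is $\sim N^{-1}$-separated, the $a_k$ are distinct, hence so are the $a_i+a_k$), and in fact $\sim N^{-1}$-separated, so certainly $\delta$-separated as $A \subset \delta\mathbb{Z}$; thus the $N$ values $a_i+a_k$ are $N$ distinct elements of $A+A$. For the second coordinate: the point $a_ja_k$ need not be an element of $Q$, but since $Q$ is a \emph{maximal} $\delta$-separated subset of $AA$, there is some $q_k \in Q$ with $|a_ja_k - q_k| \le \delta$. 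Then the ball $B_k \in \mathbb{B}$ centered at $(a_i+a_k, q_k)$ has its center within $O(\delta)$ of the on-line point $(a_i+a_k, a_ja_k) \in T$, so (after possibly fattening $T$ by a harmless constant, or absorbing into the definition of "intersects") the center of $B_k$ lies in the tube and $B_k$ intersects $T$. Finally I must check these $N$ balls are distinct: two balls $B_k, B_{k'}$ coincide only if $a_i+a_k = a_i+a_{k'}$ (forcing $k=k'$ since the first coordinates are distinct). So we get $N$ distinct balls of $\mathbb{B}$ meeting $T$.

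The only genuine subtlety — and the step to be careful about — is the $O(\delta)$ slack introduced by replacing $a_ja_k$ with its nearest point $q_k \in Q$: one must make sure the incidence convention ("center of ball lies in the tube") still holds after this perturbation, which is why it is cleanest to observe that the points $(a_i+a_k, a_ja_k)$ lie on the line $l_{ij}$ \emph{exactly}, so moving the $y$-coordinate by at most $\delta$ keeps the center within the $\delta$-tube up to a constant factor; alternatively, one defines $l_{ij}$ over $0 \le x \le 4$ precisely so that this room is available. Everything else is bookkeeping. I do not anticipate a real obstacle here — this lemma is the easy "lower bound on richness" half of Elekes' argument; the hard work of the paper is the upper bound coming from Lemma \ref{ST2}.
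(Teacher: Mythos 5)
Your proof is correct and follows essentially the same route as the paper: exhibit the $N$ points $(a_i+a_k, a_ja_k)$ on the line, snap the second coordinate to the nearest $q_k\in Q$ using maximality, and observe the resulting centers $(a_i+a_k,q_k)$ lie within $\delta$ of the line and hence in the tube. The only cosmetic difference is that you establish distinctness of the $N$ balls via the distinct first coordinates $a_i+a_k$, whereas the paper argues that the $q_k$'s themselves are distinct because the $a_ja_k$ are $\sim N^{-1}\gg\delta$ separated; both work.
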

\begin{proof}
Consider any tube and assume it is the $\delta$-neighborhood of the line segment $y=a_j(x-a_i) (0\leq x\leq 4)$.
Consider $N$ points in $(A+A)\times AA$: $\{(a_i+a_k,a_ja_k)\}_{1\leq k\leq N
}.$ All these points lie on the line segment. So, it suffices to find $N$ points from $(A+A)\times Q$ each of which is $\delta$-close to one of the aforementioned $N$ points. To do this, we first note that for every $a_ja_k$ there exists a $q_k\in Q$ such that $|q_k-a_ja_k|\leq \delta$ by the definition of $Q$. Also note that $|a_ja_k-a_ja_{k'}|\geq N^{-1}>>\delta$ for different $k,k'$, so $q_k$'s are different for different $k$'s. Therefore we find $N$ points $\{(a_i+a_k,q_k)\}_{1\leq k\leq N}$ from $(A+A)\times Q$ that are $\delta$-close to the line segment $y=a_j(x-a_i) (0\leq x\leq 4)$, and hence
the $\delta$-tube formed by this segment intersects at least $N$ $\delta$-balls from $\mathbb{B}$.
\end{proof}

Now we can apply Lemma \ref{ST2} to our set of essentially distinct tubes $\T$ and essentially distinct balls $\mathbb{B}$ with $W = N^{-1}.$ This gives us:

\begin{equation}\label{eq1}
N^2=|A|^2 < P_{N}(\mathbb{B}) \lesssim \delta^{1-\e}|\mathbb{B}|N^2N^{-1}+\delta^{2+\e}N^{-1} \lesssim N^{1-\alpha+\e} |\mathbb{B}| + N^{2\alpha -1-\epsilon'}
\end{equation}
$$\Rightarrow \N(A+A,\delta)\N(AA,\delta)=|\mathbb{B}| \gtrsim N^{1+\alpha-\e}.$$

Here we used the condition that $\alpha \le \frac{3}{2}.$

\newpage

\section{Other Results}\label{otherresults}
\subsection{The intersection of AP and GP}
We will show in some sense that the intersection of an arithmetic progression and a geometric progression is small.
\begin{theorem}\label{blablabla}
Let's fix an $\alpha\in[1,3/2]$. For any $N$, let $\delta=N^{-\alpha}$. Consider a length-$N$ arithmetic progression $A = \{1+iN^{-1}, 1\leq i \leq N\}$ and a                                            geometric progression $G = \{q^i, 1 \leq i\leq N\}$ with $q^N-1 \gtrsim 1$. Then $$ |G\cap \mathcal{E}_{\delta} (A)| \lesssim_\e N^{\max\{\alpha-1/2,(3-\alpha)/2\}+\e}$$ for any $\e>0$ $(\mathcal{E}_{\delta}(A)$ denotes the $\delta$-neighborhood of the set $A)$. 
\end{theorem}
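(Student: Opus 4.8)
The plan is to reduce Theorem~\ref{blablabla} to a counting statement about the exponents $i$, and then to exploit the rigidity of the geometric progression through a ``popular difference'' argument.

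First, since $A\subset[1,2]$ is $N^{-1}$-separated, only the elements $q^i$ with $q^i\in[1,2+\delta]$ can contribute to $G\cap\mathcal{E}_\delta(A)$. Let $M$ be the largest index with $q^M\le 2$. The hypothesis $q^N-1\gtrsim 1$ forces $q-1\gtrsim N^{-1}$, hence $M\asymp(\log q)^{-1}\lesssim N$ and $q-1\asymp M^{-1}$; in particular $q-1\gg\delta$. Since $q^i$ is within $\delta$ of $A$ exactly when $N(q^i-1)$ is within $N\delta=N^{1-\alpha}$ of an integer, it suffices to bound $m:=|S|$ where $S:=\{1\le i\le M+O(1):\ \|N(q^i-1)\|<N^{1-\alpha}\}$ and $\|\cdot\|$ denotes distance to $\mathbb Z$. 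I will in fact show $m\lesssim N^{(3-\alpha)/2}$, which is $\le N^{\max\{\alpha-1/2,\,(3-\alpha)/2\}}$ and hence proves the theorem.

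Since $S\subseteq[1,O(M)]$ has $m$ elements, at least half of its consecutive gaps are $\lesssim M/m$, so by pigeonhole there is an integer $d_0$ with $1\le d_0\lesssim M/m$ and $|S_{d_0}|\gtrsim m^2/M$, where $S_{d_0}:=\{i:\ i\in S,\ i+d_0\in S\}$. For $i\in S_{d_0}$ both $q^i$ and $q^{i+d_0}$ are $\delta$-close to $A$, so $q^{i+d_0}-q^i=q^i(q^{d_0}-1)$ is $2\delta$-close to $\tfrac1N\mathbb Z$. Put $c:=q^{d_0}-1\asymp d_0/M$. All the numbers $c\,q^i$ ($i\in S_{d_0}$) lie in an interval of length $\asymp c$, which meets $O(cN)$ residues of $\tfrac1N\mathbb Z$ (and $cN\gtrsim d_0N/M\ge1$), so by pigeonhole $\gtrsim (m^2/M)/(cN)$ of them fall in a single window of length $O(\delta)$. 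On the other hand $G$ is spread out: for $i'<i$ one has $q^i-q^{i'}=q^{i'}(q^{i-i'}-1)\ge(i-i')(q-1)\gtrsim(i-i')/M$, so such a window contains $\lesssim \delta M/c+1$ indices $i$. Comparing the two counts gives $(m^2/M)/(cN)\lesssim \delta M/c+1$, i.e.
$$m^2\lesssim \delta M^2N+cMN\lesssim N^{3-\alpha}+N^2/m,$$
using $M\lesssim N$, $\delta=N^{-\alpha}$ and $c\lesssim 1/m$. Hence $m^3\lesssim N^{3-\alpha}m+N^2$, which yields $m\lesssim\max\{N^{(3-\alpha)/2},N^{2/3}\}=N^{(3-\alpha)/2}$ since $\alpha\le\tfrac32$; the case $m=O(1)$ is trivial.

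The main work sits in the two places where structure is used. The first is the bookkeeping of Step~1: extracting $M\lesssim N$ and $q-1\asymp M^{-1}\gg\delta$ purely from $q^N-1\gtrsim1$. The second, and the real point, is that the popular difference $d_0$ is a genuine positive integer, so that $c=q^{d_0}-1$ is comparable to $d_0/M$ and the two estimates can be played against one another — this is precisely where geometric, rather than merely well-separated, behaviour of $G$ enters. Note also that one cannot instead feed $G\cap\mathcal{E}_\delta(A)$ into Theorem~\ref{main}: that set is $\gtrsim N^{-1}$-separated but, having far fewer than $N$ elements in general, is not $\sim(\#)^{-1}$-separated, so it fails the hypothesis of the main theorem and a self-contained argument like the above is needed.
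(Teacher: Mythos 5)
Your proof is correct, and it takes a genuinely different route from the paper. The paper proves the bound by feeding $B=G\cap\mathcal{E}_\delta(A)$ into the sum-product machinery: it observes $\mathcal{N}(B+B,\delta)\lesssim \mathcal{N}(A+A,\delta)\lesssim N$ and $\mathcal{N}(BB,\delta)\lesssim \mathcal{N}(GG,\delta)\lesssim N$, then reuses Equation~(\ref{eq1}) (the $\delta$-discretized Szemer\'edi--Trotter / Elekes step) with $B$ in place of $A$ to get a lower bound on $\mathcal{N}(B+B,\delta)\cdot\mathcal{N}(BB,\delta)$ and a contradiction. You instead give a self-contained, elementary argument: pass to the exponents $i$ with $\|N(q^i-1)\|<N^{1-\alpha}$, extract a popular gap $d_0$ by pigeonhole, and double count near-coincidences of $cq^i$ (with $c=q^{d_0}-1\asymp d_0/M$) against the lattice $\tfrac1N\mathbb{Z}$, playing the pigeonhole count against the spreading of the geometric progression. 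This yields $m^2\lesssim \delta M^2N+cMN$ and hence $m\lesssim N^{(3-\alpha)/2}$, which is actually stronger than the stated $N^{\max\{\alpha-1/2,(3-\alpha)/2\}}$ (the two agree only at $\alpha=4/3$). Your approach buys independence from Lemma~\ref{ST2} entirely, and it sidesteps the delicate point you correctly flag at the end: $B$ is $\gtrsim N^{-1}$-separated but typically has far fewer than $N$ elements, so the tubes built from $B\times B$ are only $|B|$-rich rather than $N$-rich, and the verbatim re-use of Equation~(\ref{eq1}) with $r=N$ needs justification (replacing $r=N$ by $r=|B|$ changes the exponent). Your direct argument avoids this issue and establishes the claimed (indeed, a slightly better) bound unconditionally.
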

\begin{proof}
Let $B = G\cap \mathcal{E}_{\delta} (A)$, and assume that $|B|\gtrsim N^{\max\{\alpha-1/2,(3-\alpha)/2\}+\e}.$ 
The condition $q^N-q = \Omega(1)$ implies that there cannot be two elements from $G$ that are in the $\delta$-neighborhood of the same element of $A.$ Actually, $q^N=1+\Omega(1)$ implies $q=(1+\Omega(1))^{1/N}>1+\frac{\Omega(1)}{N}$, and hence $q^{i+1}-q^i\geq q-1\gtrsim \frac{1}{N}$.

$A$ is an arithmetic progression, and therefore, $\mathcal{N}(A+A,\delta) \sim |A+A|\leq 2 N.$ Similarly $\mathcal{N}(G G, \delta) \sim |G G| \leq 2 N.$ Thus, because $B\subset \mathcal{E}_{\delta}(A), B\subset G,$ $$\mathcal{N}(B+B,\delta) \lesssim \mathcal{N}(A+A,\delta), \mathcal{N}(B B,\delta) \leq \mathcal{N}(G G,\delta),$$
and so $$\mathcal{N}(B+B,\delta)\cdot \mathcal{N}(B B, \delta) \lesssim N^2,$$
We will obtain a lower bound for $\mathcal{N}(B+B,\delta)\cdot \mathcal{N}(B B, \delta)$ to get a contradiction.

We put $A=B$ in Theorem \ref{main2}. We do not necessarily have $|B|\sim N$, but Equation (\ref{eq1}) still holds. Let's write down here:
$$ |B|^2 \lesssim \delta^{1-\e}MN^2N^{-1}+\delta^{2+\e}N^{-1} \lesssim N^{1-\alpha+\e} M + N^{2\alpha -1-\epsilon'} $$
Here, $M=\mathcal{N}(B+B,\delta)\cdot \mathcal{N}(B B, \delta)$. By our assumption, $|B|\gtrsim N^{\alpha-1/2}$, so we have
$$ |B|^2 \lesssim N^{1-\alpha+\e} M\lesssim N^{3-\alpha+\e}$$
which is a contradiction.
\end{proof}

\vspace{5mm}
\noindent
\textbf{Acknowledgement.}
We would like to thank Professor Larry Guth for suggesting this project and for many helpful discussions. Also, We would like to thank MIT UROP+ program for providing an opportunity to conduct this research.

\newpage

\bibliographystyle{alpha}
\bibliography{references}
\end{document}